\newtheorem{lemma}{Lemma}
\newtheorem{exam}{Example} 
\newtheorem{dfn}{Definition} 
\title{some ordered matrix semigroups}
\author {P. G. Romeo  and Sathi P H }
\keywords{}
\subjclass{}
\address{Dept. of Mathematics, CUSAT, Kochi, Kerala INDIA.}
\email{romeopg@cusat.ac.in; sathiph123@gmail.com}
\begin{document}

	\begin{abstract}
		A semigroup together with compatible partial order is called an odered semigroup. In this paper we discuss the ordered matrix semigroups.
	\end{abstract}	

\maketitle
                       
\section{Intoduction and preliminaries}

A semigroup $(S,\cdot)$ is a non empty set together with an assosiativebinary operation $\cdot$. Obviously every group is a semigroup, however there are semigroups which fails to be a group such as integers under usual multiplicaton, natural numbers under adition and the like. More familiar examples of semigroups are 
\begin{enumerate}
	\item Full transformation semigroup $\mathcal{T}(X)$ of all transformations on a set $X$ with composition of transformations as binary operation.
	\item All square matrices over a field $F$ [ring $R$] $M_n(F)\,[M_n(R)]$ with the usual multiplication of matrices as binary operation.
\end{enumerate}
As a mathematical structure semigroups attracted much attention and has grown to an extend that it pocesses a vivd and deep structure theory.  Regular semigroups and inverse semigroups include the class of semigroups which are well developed. 
An element $a$ in a semigroup $S$ is said to be von Neumann regular if there exists an alement $x\in S$ such that $axa=a$. A semigroup $S$ is said to be regular if every element $a\in S$ are regular.
A regular semigroup in which the idempotent elements ( those elements $e$ in a semigroup $S$ such that $e\cdot e=e)$ commutes are called inverse semigroups. 
\begin{dfn} Let $S$ be a semigroup and $a\in S$.
	\begin{enumerate}
		\item A particular solution to $axa=a$ is called the inner inverse of $a$ and is denoted  by $a^-$.
		\item A solution of the equation $xax=x$ is called the outer inverse of $a$ and is denoted  by $a^=$
		\item An inner inverse of $a$  that is also an outer inverse is called a reflexive
		inverse and is denoted by $a^{+}$.
	\end{enumerate}
		The set of all inner (resp. outer, resp. reflexive) inverses of  $a$ is denoted by
		$a\{1\} (resp. a\{2\} , resp. a\{1,2\} )$
\end{dfn}

\begin{dfn}
	A semigroup $S$ is said to be weakly separative if, for any $a,b\in S$,
	$$ asa=asb=bsa=bsb \Rightarrow a=b.$$	
\end{dfn}

\begin{lemma}
Every regular semigroup (in the sense of von Neumann) is weakly separative.
\end{lemma}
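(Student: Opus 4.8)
The plan is to exploit regularity by feeding well-chosen elements into the parameter $s$. Suppose $a,b\in S$ satisfy $asa=asb=bsa=bsb$ for every $s\in S$; the goal is to deduce $a=b$. Since $S$ is regular, I would fix an inner inverse $a^{-}$ of $a$ (so that $aa^{-}a=a$) and an inner inverse $b^{-}$ of $b$ (so that $bb^{-}b=b$). These two elements are the only specializations of $s$ the argument will need.

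First I would substitute $s=a^{-}$. The equality $asa=asb$ then reads $aa^{-}a=aa^{-}b$, and since $aa^{-}a=a$ this gives $a=aa^{-}b$. Next I would substitute $s=b^{-}$. From $bsb=bsa$ I obtain $bb^{-}b=bb^{-}a$, hence $b=bb^{-}a$; and from $asa=bsb$ I obtain $ab^{-}a=bb^{-}b=b$. Thus I will have collected the three identities $a=aa^{-}b$, $b=bb^{-}a$, and $ab^{-}a=b$.

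The final step chains these together. Substituting $b=bb^{-}a$ into $a=aa^{-}b$ yields $a=aa^{-}(bb^{-}a)=aa^{-}bb^{-}a=(aa^{-}b)b^{-}a$, and re-using $aa^{-}b=a$ collapses the bracket to $a=ab^{-}a$. Since $ab^{-}a=b$, I conclude $a=b$, as required.

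The only genuinely delicate point is reading the definition correctly: the parameter $s$ must be treated as universally quantified inside the hypothesis, for otherwise the implication already fails (e.g.\ for $s=0$ in $M_n(F)$, where all four products vanish regardless of $a$ and $b$). It is precisely this reading that entitles me to specialize $s$ to $a^{-}$ and to $b^{-}$. Once that is settled, regularity does all the work and the computation is routine; the one manipulation worth double-checking is the associativity rearrangement $aa^{-}bb^{-}a=(aa^{-}b)b^{-}a$ that allows the already-established identity $aa^{-}b=a$ to be reused.
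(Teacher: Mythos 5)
Your proof is correct and follows essentially the same route as the paper's: specialize $s$ to inner inverses of $a$ and $b$, extract the identities $a=aa^{-}b$ and $b=ab^{-}a$ (the paper uses $a=axb$ and $ayb=b$), and chain them via an associativity regrouping to collapse to $a=b$. The only cosmetic difference is that you substitute the hypothesis-derived identity $b=bb^{-}a$ where the paper substitutes the regularity identity $b=byb$; both computations are routine and valid.
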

\begin{proof}
Let $S$ be regular. For $a,b \in S,$ we have $axa=a$ and  $byb=b$ for some $x,y \in S$.
Assume that $asa=asb=bsa=bsb$ for all $s\in S$. In particular, $a=axa=ax$ and $ayb=byb=b$ so that $a=axb=ax(byb)=(axb)yb=ayb=b$.	
\end{proof}

\begin{exam}\item(1) The set of all transformations  on a set $X$ with composition of maps is a regular semigroup called the full transformation semigroup.
	\item (2) All $n\times n$ matrices $M_n$ over a field $F$ or a ring $R$ are regualr semigroups with respect to usual mutliplication of matrices.
\end{exam}

However, here it should be noted that the usual matrix product is not the only matrix product with respect to which $M_n$ is a semigroup. In fact there are other interseting matrix products such as 

\begin{enumerate}
	\item Kronecker product (Tensor product)
	\item Hadamard product (Schur product)
\end{enumerate}
with respect to which $M_n$ is a semigroup.

Consider $ A=[a_{ij}],B=[b_{ij}] \in M_n$, then their Kronecker product $A \otimes B$ is defined to be the  $n^{2}\times n^{2}$ matrix partitioned into $n^{2}$ blocks with the 
$(i,j)^{th}$ block as the $n\times n$ matrix $a_{ij}B$, i.e.,  

$$ \mathbf{{A}\otimes{B}} \;=\; 
\begin{pmatrix}
	a_{11}B &a_{12}B&\cdots&a_{1n}B \\
	a_{21}B &a_{22}B&\cdots&a_{2n}B \\
	\cdot&\cdot&\cdots &\cdot\\
	a_{n1}B &a_{n2}B&\cdots&a_{nn}B 
\end{pmatrix}. $$
Clearly $( A\otimes B)\otimes C=A \otimes (B\otimes C)$ for all $A,B,C \in M_n$, thus  $(M,\otimes)$ is a semigroup.\\ 

The Hadamard product or Schur product of $ A,B\in M_n$ is defined by the entrywise product $A\circ B=(a_{ij}b_{ij})$ that is, 

$$ \mathbf{{A}\circ{B}} \;=\; \begin{pmatrix}
	a_{11}b_{11} &a_{12}b_{12}&\cdots&a_{1n}b_{1n} \\
	a_{21}b_{21} &a_{22}b_{22}&\cdots&a_{2n}b_{2n} \\
	\cdot&\cdot&\cdots &\cdot\\
	a_{n1}b_{n1} &a_{n2}b_{n2}&\cdots&a_{nn}b_{nn} 
\end{pmatrix}  
$$

Since $A\circ (B\circ C)=(A \circ B)\circ C $ for all $A,B,C \in M_{n}(F)$,  we have $(M_n,\circ )$ is a semigroup with respect to the multiplication $\circ$.\\

A Partial order on a semigroup $S$ is a binary relation $\leq$ on $S$ which is reflexive, antisymmetric and transitive.
\begin{dfn}
An ordered semigroup $(S, \cdot, \leq)$ is a semigroup $(S,\cdot)$ together with a compatible partial order  $"\leq "$ on $S$ such that  for any $x,y,z \in S$, 
  $$ x \leq y \Rightarrow \, xz \leq xz\quad\text{and}\quad zx \leq zy.$$ 
\end{dfn}
  \begin{exam}
  	 The set of all relations on a set $X$ denoted by $B(X)$ is an ordered  semigroup with the composition of relations as the binary operation and inclusion as the compatible partial order on $B(X)$.  
  \end{exam}

Generally, any semigroup$(S,\cdot)$ can be considered as an ordered semigroup with respect to the order $\leq$ as the identity relation on $S$, that is  

$$x \leq y \Longleftrightarrow x=y.$$

 If $P$ is any partial order on a semigroup $S$ then the relation 
 $$P_{1}=\{(x,y)\,\vert\, (axb,ayb) \in  P\,\text{for all}\, (a,b) \in S\}$$ 
 is the largest compatible partial order contained in $P$. Thus, every semigroup can be endowed with a partial order so that $S$ becomes an ordered semigroup.

\begin{lemma}
Let $(S,\cdot,\leq)$ be an ordered semigroup. For every $s \in S$ define a binary composition $\rho$ on $S$ by
$$x\rho y = xsy \quad \text{for all}\quad  x,y \in S $$ 
Then, $(S, \rho )$ is a semigroup such that $(S, \rho,\leq)$ is an ordered semigroup and is called the $s$-invariant of $S$.
\end{lemma}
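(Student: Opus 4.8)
The plan is to establish the two defining properties of an ordered semigroup for the pair $(S,\rho)$ equipped with the \emph{same} order $\leq$ already carried by $S$. Concretely, I must show (i) that $\rho$ is associative, so that $(S,\rho)$ is a semigroup, and (ii) that $\leq$ is compatible with $\rho$ in the sense of the ordered-semigroup axiom. I expect both to reduce directly to the corresponding facts for the original operation $\cdot$ together with its compatibility with $\leq$, with the fixed element $s$ simply absorbed into the products.

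First I would verify associativity by expanding each side of the associativity identity and invoking associativity of $\cdot$. On the one hand $(x\rho y)\rho z=(xsy)\rho z=(xsy)sz$, and on the other $x\rho(y\rho z)=x\rho(ysz)=xs(ysz)$; since $\cdot$ is associative, both sides equal the single product $xsysz$. Hence $(x\rho y)\rho z=x\rho(y\rho z)$ for all $x,y,z\in S$, and $(S,\rho)$ is a semigroup.

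Next I would check compatibility of $\leq$ with $\rho$. Assume $x\leq y$. For right translations we have $x\rho z=xsz$ and $y\rho z=ysz$; applying the compatibility of $\leq$ with $\cdot$ first to right multiplication by $s$ yields $xs\leq ys$, and then to right multiplication by $z$ yields $xsz\leq ysz$, i.e.\ $x\rho z\leq y\rho z$. The left-translation case is entirely symmetric: from $x\leq y$ one obtains $sx\leq sy$ and then $zsx\leq zsy$, that is $z\rho x\leq z\rho y$. This establishes that $(S,\rho,\leq)$ is an ordered semigroup.

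I do not anticipate a genuine obstacle here; the argument is routine. The only point requiring a little care is that each compatibility conclusion must be reached by applying the original compatibility axiom \emph{twice}---once to bring in the fixed element $s$ and once for the translating element---so that each $\rho$-translation is realised as a composition of two order-preserving $\cdot$-translations.
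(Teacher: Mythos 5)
Your proof is correct and complete: associativity of $\rho$ follows from associativity of $\cdot$ via $(x\rho y)\rho z = xsysz = x\rho(y\rho z)$, and each compatibility claim follows by applying the compatibility of $\leq$ with $\cdot$ twice, exactly as you note. The paper states this lemma without any proof at all, so there is no authorial argument to compare against; your write-up is the standard argument and supplies precisely what the paper leaves unjustified.
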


\begin{dfn}
	Let $(S, \cdot, \leq)$ be an ordered semigroup. A non empty subset $A\in S$ is a subsemigroup of $S$ if for any $a,b \in A$  implies $ab \in A$.
\end{dfn}

\section{Ordered Matrix Semigroups}

In the following we proceed to describe ordered semigroup $n\times n$ matrices. At the outset it is seen that the semigroup $M_{n}(\mathbb{R})$ square matrices over real nuumbers with order defined by, for $A,B\in M_{n}(\mathbb{R})$  with $A=[a_{ij}],B=[b_{ij}],\, 1\leq i\leq n,\, 1\leq j \leq n$, then 	
$$A \leq B \Rightarrow  a_{ij}\leq b_{ij}\, \text{for all}\quad i\quad \text{and}\, j $$
fails to be an ordered semigroup. However for $M_{n}(\mathbb{Z}_+)$ of all $n\times n$ matrices with non negative integer entries we have the following lemma.
\begin{lemma}
Consider the semigroup $M_{n}(\mathbb(\mathbb{Z}_+))$ of all $n\times n$ matrices with non negative integer entries. For $A,B\in M_{n}(\mathbb{Z}_+)$  with $A=[a_{ij}],B=[b_{ij}],\, 1\leq i\leq n,\, 1\leq j \leq n$, define the relation $\leq$ by 	
$$A \leq B \Rightarrow  a_{ij}\leq b_{ij}\, \text{for all}\quad i\quad \text{and}\, j $$
is a partial order with respect to which $M_{n}(\mathbb{Z}_+)$ is an ordered semigroup.
\end{lemma}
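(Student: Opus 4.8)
The plan is to verify the two separate assertions packaged in the statement: first, that the entrywise relation $\leq$ is genuinely a partial order on $M_n(\mathbb{Z}_+)$, and second, that it is compatible with matrix multiplication in the sense required by the definition of an ordered semigroup. The first assertion will be cheap, inherited directly from the fact that $\leq$ is a partial order (indeed a total order) on $\mathbb{Z}_+$; the real content lies in the second, where the non-negativity of the entries is the decisive hypothesis.

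For the partial-order axioms I would argue entrywise. Reflexivity is immediate since $a_{ij}\leq a_{ij}$ for every $i,j$. For antisymmetry, if $A\leq B$ and $B\leq A$ then $a_{ij}\leq b_{ij}$ and $b_{ij}\leq a_{ij}$ force $a_{ij}=b_{ij}$ for all $i,j$, hence $A=B$. For transitivity, $A\leq B$ and $B\leq C$ give $a_{ij}\leq b_{ij}\leq c_{ij}$, so $a_{ij}\leq c_{ij}$ and $A\leq C$. Each step reduces to the corresponding property of the usual order on the non-negative integers, applied coordinate by coordinate.

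The compatibility is where I would spend the effort, and it is the step that genuinely uses $\mathbb{Z}_+$ rather than all of $\mathbb{R}$. Suppose $A\leq B$ and let $C=[c_{ij}]\in M_n(\mathbb{Z}_+)$ be arbitrary. The $(i,j)$ entry of $CA$ is $\sum_{k} c_{ik}a_{kj}$ and that of $CB$ is $\sum_{k} c_{ik}b_{kj}$. Because $a_{kj}\leq b_{kj}$ and, crucially, $c_{ik}\geq 0$, each term satisfies $c_{ik}a_{kj}\leq c_{ik}b_{kj}$; summing over $k$ preserves the inequality, so $(CA)_{ij}\leq(CB)_{ij}$ for all $i,j$, i.e. $CA\leq CB$. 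The right-multiplication case $AC\leq BC$ is entirely symmetric, using $a_{ik}\leq b_{ik}$ and $c_{kj}\geq 0$.

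The main obstacle is conceptual rather than computational: one must notice that the preservation of each inequality $c_{ik}a_{kj}\leq c_{ik}b_{kj}$ hinges on the multiplier $c_{ik}$ being non-negative. This is exactly the point that fails over $M_n(\mathbb{R})$, where a negative entry reverses the inequality, which is why the earlier remark observed that the real case is not an ordered semigroup. I would therefore flag the non-negativity hypothesis explicitly at the moment it is used, since it is the heart of the lemma and the reason the restriction to $\mathbb{Z}_+$ is necessary.
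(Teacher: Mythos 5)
Your proposal is correct and follows essentially the same route as the paper: verify the three partial-order axioms entrywise, then check compatibility by comparing $\sum_k c_{ik}a_{kj}$ with $\sum_k c_{ik}b_{kj}$ term by term. Your explicit flagging of the non-negativity of the multipliers $c_{ik}$ as the decisive hypothesis is a welcome clarification that the paper's proof leaves implicit, but it is not a different argument.
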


\begin{proof}
Clearly the order $\leq$ is reflexive as, $a_{ij} \leq a_{ij}$ for all $i$ and $j$.
Suppose $A\leq B$ and $B\leq A$, ie.,  $a_{ij} \leq b_{ij}$ and $b_{ij}\leq a_{ij}$ for all $i$ and $j$.This gives $a_{ij}=b_{ij}$ and hence anti symmetric. If $A \leq B$ and$ B\leq C$, then $a_{ij}\leq b_{ij}$ and $b_{ij}\leq c_{ij}$. This implies $a_{ij} \leq c_{ij}$,ie, $A \leq C$, hence transitive.Thus $\leq$ is a partial order.

To prove the compatibility, suppose $A\leq B$. ie,$a_{ij}\leq b_{ij}$. For $C=[c_{ij}]\in M$,
$$(CA)_{ij} =\sum c_{ik}a_{kj}\leq \sum c_{ik} b_{kj} =(CB)_{ij}\,\text{for}\,1\leq k \leq n$$
thus $CA\leq CB$. Similarly it is seen that $AC\leq BC$, ie., $\leq$ is compatible. 
\end{proof}. 

\subsection{Conrad order on Regular matrix semigroups}
 
 In  (cf. \cite{abi}) Abian described a partial order on a semiprime ring $R$  as follows 
 $$a\leq b \Longleftrightarrow arb=ara \quad \text{for all}\quad r\in R .$$ 
 In the light of this partial order, Conrad  defined a relation $\rho$ on a semigroup $S$ by 
 $$a\rho b\Leftrightarrow asa=asb=bsa \quad\text{for all}\quad s\in S $$  
 which turned out to be a partial order for  'weakly separative' semigroups.\\
 
Ler $R$ a ring. Consider matrix semigroup  $M_{n}(R)$. Now we proceed to describe the relation $\rho$ on $M_{n}(R)$ as follows, for $ A ,B \in M_{n}(R)$., 
$$A\rho B \Leftrightarrow ASA=ASB=BSA\quad  \forall \,\, S\in \,\,  M_{n}(R)$$
and this relation $\rho$ is called the Conrad relation on $M_{n}(R)$

\begin{lemma} The Conrad relation on $M_n (R)$ is a partial order with respect to which $M_n (R)$ is an ordered semigroup. \end{lemma}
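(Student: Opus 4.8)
The plan is to verify the three partial-order axioms together with compatibility, leaning on two facts already available in the paper: that $M_n(R)$ is regular (Example) and therefore weakly separative (Lemma). Reflexivity is immediate, since $ASA=ASA=ASA$ holds trivially for every $S$, giving $A\rho A$. For antisymmetry I would assume both $A\rho B$ and $B\rho A$; the first yields $ASA=ASB=BSA$ and the second yields $BSB=BSA=ASB$ for all $S$, and concatenating these gives
$$ASA=ASB=BSA=BSB\quad\text{for every}\quad S\in M_n(R).$$
This is precisely the hypothesis in the definition of weak separativity, so $A=B$.

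The heart of the argument is transitivity, and this is where I expect the real work to lie. Suppose $A\rho B$ and $B\rho C$, so that for all $S$ we have $ASA=ASB=BSA$ and $BSB=BSC=CSB$. To conclude $A\rho C$ I must produce, for every $S$, the identities $ASA=ASC=CSA$. The device is to invoke regularity of $A$: fix an inner inverse $P=A^{-}$ with $APA=A$. Substituting $S=P$ into the relation $A\rho B$ gives the two pivotal equalities $APB=A$ and $BPA=A$. Then for arbitrary $S$ I would compute
$$ASC=(APB)SC=AP(BSC)=AP(BSB)=(APB)SB=ASB=ASA,$$
using $A=APB$, then $BSC=BSB$ from $B\rho C$, and finally $APB=A$ and $ASB=ASA$ from $A\rho B$. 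A mirror-image calculation,
$$CSA=CS(BPA)=(CSB)PA=(BSB)PA=BS(BPA)=BSA=ASA,$$
now uses $A=BPA$, then $CSB=BSB$ from $B\rho C$, and $BPA=A$ together with $BSA=ASA$ from $A\rho B$. Combining the two displays gives $ASA=ASC=CSA$ for all $S$, which is exactly $A\rho C$. The main obstacle here is spotting that a single inner inverse of $A$ suffices to collapse the chain through the intermediate element $B$; once the pivots $APB=A$ and $BPA=A$ are in hand the computation is forced.

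Finally, for compatibility I would prove $A\rho B\Rightarrow CA\rho CB$ and $A\rho B\Rightarrow AC\rho BC$ by a substitution trick that needs no regularity. For the left version, writing $(CA)S(CA)=C(AS'A)$ with $S'=SC$ and applying $A\rho B$ in the form $AS'A=AS'B=BS'A$ converts the three required equalities for $CA,CB$ into three instances of the defining equalities for $A,B$, yielding
$$(CA)S(CA)=(CA)S(CB)=(CB)S(CA).$$
The right version $AC\rho BC$ follows symmetrically on setting $S'=CS$, so that $(AC)S(AC)=(AS'A)C$ and factoring out the trailing $C$ gives $(AC)S(AC)=(AC)S(BC)=(BC)S(AC)$. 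Assembling reflexivity, antisymmetry, transitivity, and compatibility shows that $\rho$ is a compatible partial order, so $(M_n(R),\cdot,\rho)$ is an ordered semigroup.
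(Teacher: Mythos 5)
Your proof is correct, but it takes a genuinely more self-contained route than the paper. The paper disposes of antisymmetry and transitivity in one stroke by citing the Burgess--Raphael theorem that $\rho$ is a partial order on a semigroup if and only if the semigroup is weakly separative, combined with the earlier lemma that regular semigroups are weakly separative; its accompanying transitivity computation (the chain beginning $(ASC)P(ASC)=\cdots$) is essentially an attempt to apply weak separativity to the pair $ASC$, $CSA$, and is left in a rather garbled state. You instead use weak separativity only where it is transparently needed (antisymmetry, by concatenating the two hypothesis chains into $ASA=ASB=BSA=BSB$), and you prove transitivity by an explicit computation: fixing an inner inverse $P$ of $A$, extracting the pivots $APB=A=BPA$ from $A\rho B$ at $S=P$, and then collapsing $ASC$ and $CSA$ to $ASA$ through $B$. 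This makes the argument verifiable line by line without the external citation, at the cost of invoking regularity of $A$ directly (which, like the paper, you inherit from the paper's assertion that $M_n(R)$ is regular -- a claim that is really only safe over a field or a von Neumann regular ring). For compatibility you also diverge usefully: the paper asserts the two-sided statement $A\rho B,\ C\rho D\Rightarrow AC\rho BD$ with an unjustified chain of equalities, whereas your substitution $S'=SC$ (resp.\ $S'=CS$) cleanly establishes the one-sided compatibilities $CA\rho CB$ and $AC\rho BC$, which is exactly what the paper's own definition of an ordered semigroup requires (and from which the two-sided version follows by transitivity). Both approaches prove the lemma; yours is the more elementary and the more rigorous of the two.
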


\begin{proof} It is easy to see that the relation $\rho$ on $M_{n}(R)$ is reflexive. For transitivity, suppose $A \rho B$ and $B \rho C$, then $ASA=ASB=BSA$ and $BSB=BSC=CSB$ for all $S \in M_{n}(F)$. 
 For $P \in M_{n}(R),$ we obtain $(ASC)P(ASC)=(ASC)P(ASB)=(ASC)P(BSA)=(ASC)P(CSA) \Rightarrow ASC=CSA $.  Similarly, $(ASC)P(ASC)=(ASC)P(ASB)=(ASC)P(BSA)=(ASC)P(CSA) \Rightarrow ASC=CSA$. This gives $ASC=CSA=ASA$, ie, $A \rho C$.
	
 Burgess and Raphael proved that $\rho$ is a partial order on a semigroup $S$ if and only if it is weakly separative. Since $M_{n}(R)$ is a regular semigroup, obviously  $M_{n}(R)$ is weakly seperative and hence $\rho$ is a partial order.  
 Suppose $A \rho B$  and $C \rho D$. Then, $ASB=BSA=BSB$ and $CSC=CSD=DSC \quad \forall \, S \in M_{n}(F)$. 	Then,$(AC)S(BD)=(AC)S(AC)=(BD)S(AC) \Rightarrow AC \rho BD$, hence the compatibility and $(M_n (R),\rho)$ is an ordered matrix semigroup.
\end{proof}

\subsection{More  partial orders on $M_{n}(R)$}
 Let $R$ be any ring. Jerzy K Baksalary and Sujit Kumar Mitra introduced the following two orderings on $M_{n}(R)$

For $ A, B \in M_{n}(R)$, two orders defined by 
$$A\star \leq B \Leftrightarrow A^{\star}A=A^{\star}B\,\,\text{and}\,\, R(A)\subseteq R(B)$$ 
where $A^{*}$ stands for the conjugate transpose of $A$ and $R(A)$ is the range space of $A$. 
In a similar way another order is given by 
$$A \leq \star B\Leftrightarrow AA^{\star}= BA^{\star}\,\text{and}\, R(A^{*})\subseteq R(B^{\star}).$$
These orders are called the left star order and the right star order respectively.

\begin{lemma}
	The left star order and the right star order are partial orders on $M_{n}(R)$
\end{lemma}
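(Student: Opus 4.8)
The plan is to verify the three defining properties of a partial order for the left star order $\star\!\leq$, and then obtain the right star order case essentially for free by passing to conjugate transposes. Throughout I write $A^{*}$ for the conjugate transpose and recall that $R(A)\subseteq R(B)$ is equivalent to the factorization $A=BX$ for some $X\in M_n(R)$, since $R(A)\subseteq R(B)$ says precisely that every column of $A$ lies in the column space of $B$. Reflexivity is immediate: for any $A$ we have $A^{*}A=A^{*}A$ and $R(A)\subseteq R(A)$, so $A\star\!\leq A$.

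The substantive step is transitivity. Suppose $A\star\!\leq B$ and $B\star\!\leq C$, so that $A^{*}A=A^{*}B$ with $R(A)\subseteq R(B)$, and $B^{*}B=B^{*}C$ with $R(B)\subseteq R(C)$. The range inclusions chain immediately to give $R(A)\subseteq R(B)\subseteq R(C)$, so the second requirement $R(A)\subseteq R(C)$ for $A\star\!\leq C$ holds. The first requirement, $A^{*}A=A^{*}C$, is where the range condition must be exploited. I would use $R(A)\subseteq R(B)$ to write $A=BX$, hence $A^{*}=X^{*}B^{*}$. From the second relation, $B^{*}(C-B)=B^{*}C-B^{*}B=0$, and therefore $A^{*}(C-B)=X^{*}B^{*}(C-B)=0$. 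This yields $A^{*}C=A^{*}B=A^{*}A$, which is exactly what is needed, so $A\star\!\leq C$.

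For antisymmetry, assume $A\star\!\leq B$ and $B\star\!\leq A$, and set $C=B-A$. From $A^{*}A=A^{*}B$ I get $A^{*}C=0$, and from $B^{*}B=B^{*}A$ I get $B^{*}C=0$. Writing $B=A+C$ and expanding, $0=B^{*}C=(A^{*}+C^{*})C=A^{*}C+C^{*}C=C^{*}C$, so $C^{*}C=0$, which forces $C=0$ and hence $A=B$. This completes the verification that $\star\!\leq$ is a partial order. For the right star order, I would observe that $AA^{*}=BA^{*}$ is equivalent, upon taking adjoints, to $AA^{*}=AB^{*}$, and hence that $A\leq\!\star B$ holds if and only if $A^{*}\star\!\leq B^{*}$. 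Since the map $A\mapsto A^{*}$ is an involutive bijection of $M_n(R)$, reflexivity, antisymmetry and transitivity transfer verbatim from $\star\!\leq$ to $\leq\!\star$.

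The main obstacle is the transitivity argument, and specifically the passage $A^{*}B=A^{*}C$: the multiplicative conditions $A^{*}A=A^{*}B$ and $B^{*}B=B^{*}C$ do not combine directly, and the factorization $A=BX$ coming from the range inclusion is exactly the device that lets the relation on $B$ be transported back to $A$. I would also flag that the antisymmetry step relies on the implication $C^{*}C=0\Rightarrow C=0$; this is automatic over $\mathbb{C}$ (or any field with a positive-definite involution) but is a genuine hypothesis on $R$ that should be stated, since the star orders are naturally defined in that setting.
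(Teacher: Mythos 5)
Your proof is correct, but it takes a genuinely different and more elementary route than the paper's. The paper verifies antisymmetry and transitivity by invoking the Moore--Penrose inverse: for antisymmetry it uses $R(A)=R(B)$ to get $A=AA^{\dagger}A=(AA^{\dagger})^{\star}B=AA^{\dagger}B=B$ (using that $AA^{\dagger}$ is the Hermitian projector onto $R(A)\supseteq R(B)$), and for transitivity it threads $BB^{\dagger}$ through the chain $A^{\star}A=A^{\star}B=\cdots=A^{\star}C$, ending with $(BB^{\dagger}A)^{\star}C=A^{\star}C$ via $R(A)\subseteq R(B)$. You replace both uses of $\dagger$ with the bare factorization $A=BX$ coming from the range inclusion, which is exactly the content of $BB^{\dagger}A=A$ but requires no existence or uniqueness theory for generalized inverses; and your antisymmetry argument is purely algebraic, needing only the two multiplicative conditions plus $C^{\star}C=0\Rightarrow C=0$, and does not use the range inclusions at all. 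Your closing observation that $A\leq\!\star\,B$ iff $A^{\star}\,\star\!\leq B^{\star}$ gives the right star case by an involutive transport of structure, whereas the paper just says ``similarly.'' Your flag about the hypothesis $C^{\star}C=0\Rightarrow C=0$ is well taken: the paper states the lemma for an arbitrary ring $R$ while simultaneously using conjugate transposes and Moore--Penrose inverses, which only make sense over $\mathbb{C}$ (or a suitable $\ast$-ring), so your version actually makes the needed hypotheses explicit where the paper leaves them implicit. The one point worth stating carefully in your write-up is the equivalence $R(A)\subseteq R(B)\Leftrightarrow A=BX$, which is immediate over a field but is a nontrivial assumption in the generality the lemma claims.
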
 
\begin{proof}
	Clearly $A \star\leq A $. Suppose $A \star \leq B$ and $B \star \leq A$. Then $A^{\star}A=A^{\star}B,\,\,R(A)\subseteq R(B),\,\,B^{\star}B=B^{\star}A$ and $R(B)\subseteq R(A)$. This gives $R(A)=R(B)$. Let  $A^{\dagger}$ be the Moore-Penrose  inverse of $A$ ( ie., the unique matrix satisfying $AA^{\dagger}A =A$, and $A^{\dagger}AA^{\dagger}=A^{\dagger}$ ). Then $AA^{\dagger}=(AA^{\dagger})^{\star}$ and $A^{\dagger}A=(A^{\dagger}A)^{\star}$ and 
	$$A=AA^{\dagger}A=(AA^{\dagger})^{\star}A=(A^{\dagger})^{\star}(A^{\star}A=(A^{\dagger})^{\star}A^{\star}B=(AA^{\dagger})^{\star}B=AA^{\dagger}B=B$$
	hence the relation $' \star \leq'$ is antisymmetric.
	
	For transitivity, let  $A \star \leq B$ and $B \star \leq C$. Then, $A^{\star}A=A^{\star}B,\, \,R(A) \subseteq R(B),  B^{\star}B = B^{\star}C$ and $R(B) \subseteq R(C)$, from this we get $R(A) \subseteq R(C)$. Now, $A^{\star}A=A^{\star}B=A^{\star}(BB^{\dagger}B)=A^{\star}(BB^{\dagger})^{\star}B=(A^{\star}(B^{\dagger})^{\star}B^{\star})B=A^{\star}(B^{\dagger})^{\star}(B^{\star}B)=A^{\star}(B^{\dagger})^{\star}(B^{\star}C)=(BB^{\dagger}A)^{\star}C=A^{\star}C$. Thus $A^{\star}A=A^{\star}C$ with $R(A) \subseteq R(C)$ implying that $A \star \leq C*$.
	
	 Similarly it can be seen that the right star order $"\leq \star"$ is a partial order on $M_{n}(R)$.
\end{proof}

\begin{lemma}
	$M_{n}(R)$ is an ordered semigroup under the left star ordering.
\end{lemma}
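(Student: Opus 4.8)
The plan is to build on the preceding lemma, which already establishes that $\star\leq$ is a partial order on $M_{n}(R)$; hence the only thing left to verify is compatibility, namely that $A \star\leq B$ forces $AC \star\leq BC$ and $CA \star\leq CB$ for every $C \in M_{n}(R)$. I would treat the two defining conditions of the left star order, the normal-equation condition $A^{\star}A = A^{\star}B$ and the range-inclusion condition $R(A) \subseteq R(B)$, separately, since they behave very differently under multiplication.

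First I would dispatch the algebraic condition, which is the easy half. Multiplying $A^{\star}A = A^{\star}B$ on the left by $C^{\star}$ and on the right by $C$ gives $(AC)^{\star}(AC) = C^{\star}A^{\star}A C = C^{\star}A^{\star}B C = (AC)^{\star}(BC)$, so the normal-equation part survives multiplication on the right. I would flag that the same manipulation does \emph{not} go through on the left: from $A^{\star}A = A^{\star}B$ one cannot in general deduce $A^{\star}C^{\star}C A = A^{\star}C^{\star}C B$, because the inserted factor $C^{\star}C$ destroys the cancellation. For this reason I would organize the whole argument around right multiplication, i.e. around proving $A \star\leq B \Rightarrow AC \star\leq BC$, which is the only side on which the left star order has any chance of being compatible.

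The range-inclusion condition is the step I expect to be the genuine obstacle. Using $R(A) \subseteq R(B)$ one can write $A = B B^{\dagger} A$, where $BB^{\dagger}$ is the orthogonal projector onto $R(B)$, hence $AC = B(B^{\dagger} A C)$ and $R(AC) \subseteq R(A) \subseteq R(B)$. The trouble is that compatibility demands the sharper inclusion $R(AC) \subseteq R(BC)$, and descending from $R(B)$ to the smaller space $R(BC)$ is not justified by range inclusion alone. I would attempt to force it by combining the orthogonality $A^{\star}(B - A) = 0$ with the factorization above, but I anticipate that the argument stalls precisely here: right multiplication by $C$ can rotate $R(A)$ out of $R(BC)$ even when $R(A)\subseteq R(B)$. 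Consequently I expect the clean statement to hold only after restricting $C$ (for instance to invertible $C$, for which $R(BC)=R(B)$) or after passing to a suitable subclass of matrices, and the crux of any honest proof will be isolating the exact hypothesis under which $R(AC)\subseteq R(BC)$ can be guaranteed. Once such a hypothesis is in place, the normal-equation part and the range part combine at once to yield $AC \star\leq BC$, and hence the ordered-semigroup property under the left star order.
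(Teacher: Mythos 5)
Your analysis is essentially correct, and the place where you predict the argument must stall is exactly where the paper's own proof stalls: at the corresponding point the paper simply asserts ``since $R(C) \subseteq R(D)$, we have $R(AC) \subseteq R(BD)$'' with no justification. That assertion is false in general, so the lemma as stated fails and no proof of it can exist. A concrete counterexample: take $A=\begin{pmatrix}1&0\\0&0\end{pmatrix}$, $B=I_{2}$, and $C=D=\begin{pmatrix}1&0\\1&0\end{pmatrix}$. Then $A^{\star}A=A=A^{\star}B$ and $R(A)\subseteq R(B)$, so $A \star\leq B$, while $C \star\leq D$ holds by reflexivity. But $AC=\begin{pmatrix}1&0\\0&0\end{pmatrix}$ and $BC=C$, so $R(AC)=\mathrm{span}\{e_{1}\}\not\subseteq\mathrm{span}\{(1,1)^{T}\}=R(BC)$: the normal-equation half survives, exactly as your computation shows, but the range half does not, and $AC \not\star\leq BC$. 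Your instinct that compatibility can only be rescued by restricting $C$ (e.g.\ to invertible matrices, where $R(BC)=R(B)\supseteq R(A)\supseteq R(AC)$) or by shrinking the class of matrices under consideration is the right one; the counterexample shows some such restriction is unavoidable.

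Two further points of comparison. First, your warning about left multiplication is also on target: from $A^{\star}A=A^{\star}B$ one cannot extract $(CA)^{\star}(CA)=(CA)^{\star}(CB)$ because the inserted factor $C^{\star}C$ blocks the cancellation, yet the paper's own definition of an ordered semigroup demands $zx\leq zy$ as well as $xz\leq yz$, and the paper never addresses the left-hand requirement at all. Second, the paper works with the two-variable form ($A \star\leq B$ and $C \star\leq D$ imply $AC \star\leq BD$) rather than your one-sided form; by reflexivity the two are equivalent, so the counterexample above (with $C=D$) refutes both formulations. In short, your proposal does not prove the lemma, but that is because the lemma is not true; the genuine content of your write-up is the correct identification of the irreparable step, which the paper's proof conceals rather than resolves.
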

\begin{proof}
It is already seen that the left star order is a partial order on $M_{n}(R)$. Consider 
$A,B,C,D \in M_{n}(F)$ such that $A \star \leq B$ and $C \star \leq D$, then   $A^{\star}A=A^{\star}B,\,\,R(A) \subseteq R(B),\,\,C^{\star}C=C^{\star}D$ and $R(C) \subseteq R(D)$. \\
Further	
\begin{align*}
(AC)^{\star}AC=C^{\star}(A^{\star}A)C=C^{\star}(A^{\star}B)C &=(AC)^{\star}B(CC^{\dagger}C)=(AC)^{\star}B(CC^{\dagger})^{\star}C\\
&=(AC)^{\star}B(C^{\dagger})^{\star}(^{\star}C)\\
&=(AC)^{\star}B(C^{\dagger})^{\star}(C^{\star}D)\\
&=(AC)^{\star}B(CC^{\dagger})^{\star}=(AC)^{\star}B(CC^{\dagger}D)\\
&=(AC)^{\star}BD.		
\end{align*} 
and since $R(C) \subseteq R(D)$, we have $R(AC) \subseteq R(BD)$. Thus $AC\star\leq BD$ and hence  $(M_{n}(R),\cdot, \star \leq)$ is an ordered matrix semigroup.\\
Similarly we can prove that $(M_{n}(R),\cdot,\leq \star)$ is an ordered semigroup.
\end{proof}

\subsection{ Positive semidefinite matrices\,(PSD)}
From here onwards we restrict to the matrices $M_{n}(\mathbb{C})$. A matrix $A \in M_{n}(\mathbb{C})$ is said to be positive semi definite (positive definite) if $v^{\star}Av \geq 0 \,(v^{\star}Av > 0)$ for all $v \in \mathbb{C}_{n}$. We write $A \geq 0\,(A>0)$ to mean that $A$ is positive semi definite (positive definite).\\

In the following we list some properties of PSD matrice
\begin{enumerate}
	\item $A$ is PSD if and only if  it is Hermitian, ie., $A = A^{\star}$
	\item If $A$ is PSD, then all principal submatrices of $A$, $Trace\,(A),\,\mid A \mid$ and all principal minors of $A$ are PSD.
	\item $A$ is PSD if and only if $A=M^{\star}M$ for some matrix $M$.
	\item $A$ is PSD if and only if $A= P^{\star}P$ for some upper triangular matrix $P$ with non negative diagonal entries only(Cholesky decomposition of $A$
	\item $A$ and $B$ be congruent matrices then $A$ is PSD $\Leftrightarrow B$ is PSD.
	\item $A$ is PSD if and only if $A=B^{2}$ for some PSD matrix $B$. Then the unique $B=A^{1/2}$ is called the  positive square root of $A$.
	\item Schur product theorem: Let $A$ and $B$ be PSD matrices of size n. Then the Schur product  $A \circ B$ is also PSD.(but the conventional product neednot be PSD)
	\item Let $A$ be Hermitian and PSD. Then there exists a sequence of PSD matrices $A_{1},  A_{2}  ....$ such that $A_{k} \rightarrow A$ as $k \rightarrow \infty$. We can define $A_{k}=A+k^{-1}I$
\end{enumerate}

It is easy to observe that the set of all $n \times n$ Hermitian matrices denoted by $M_{n}^{H} \subset M_{n}(\mathbb{C})$. For $A,B \in M_{n}^{H},\, A \circ B$ is always Hermitian, ie., $M_{n}^{H}$ is a semigroup and is a subsemigroup of $M_{n}(\mathbb{C})$.

\subsection{ Loewner partial order on $M_{n}^{H}$}   
Every partial order $\preceq$ on a real linear space $S$ can be defined by $A \preceq B$ 
for all $A,B  \in S$ if their difference lies in a special closed convex cone. For the Loewner order the elements of the real linear space are the $n \times n$ Hermitian matrices and elements of the closed convex cone are the positive semi-definite matrices (PSD).\\

Also, the collection of all PSD matrices of order $n$ is a semigroup under '$\circ$' and is denoted by $M_{n}^{\geq}$. Then 
$$M_{n}^{\geq}= \lbrace K\in M_{n}\, \mid \,  K=LL^{\ast}\,\text{ for some}\, L \in M_{n}\rbrace. $$
For $A,B \in M_{n}^{\geq}$, $A \circ B \in  M_{n}^{\geq}$  and $A \circ (B \circ C)=(A \circ B) \circ C$ for all $A,B,C \in  M_{n} ^{\geq}$ and hence $ M_{n}^{\geq}$ is a semigroup. Now we define the Loewner order $\preceq$ on $M_{n}^{H}$ by $A \preceq B$ if and only if f $B-A$ is Hermitian and positive semidefinite.\\
\begin{lemma} The Loewner partial order 
	$\preceq$ is a compatible partial order on $M_{n}^{H}$.
\end{lemma}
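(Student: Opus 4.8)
The plan is to verify the two ingredients separately: that $\preceq$ is a partial order on $M_n^H$, and that it is compatible with the Schur product $\circ$. Throughout I would use the characterisation $A \preceq B \Leftrightarrow B - A$ is Hermitian and positive semidefinite, together with the elementary facts that the difference of Hermitian matrices is Hermitian and that the sum of two positive semidefinite matrices is again positive semidefinite.

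For the order axioms I expect this half to be entirely routine. Reflexivity is immediate since $A - A = 0$ is Hermitian and positive semidefinite, so $A \preceq A$. For antisymmetry, suppose $A \preceq B$ and $B \preceq A$; then $B - A$ and $A - B$ are both positive semidefinite, so for every $v \in \mathbb{C}_n$ we have $v^{\star}(B-A)v \geq 0$ and $v^{\star}(B-A)v \leq 0$, forcing $v^{\star}(B-A)v = 0$ for all $v$. Since $B - A$ is Hermitian this yields $B - A = 0$, i.e. $A = B$. For transitivity, if $A \preceq B$ and $B \preceq C$ then $C - A = (C - B) + (B - A)$ is a sum of positive semidefinite Hermitian matrices, hence positive semidefinite, so $A \preceq C$.

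The substantive part is compatibility. Since $\circ$ is commutative on $M_n^H$, the two conditions $A \circ C \preceq B \circ C$ and $C \circ A \preceq C \circ B$ coincide, so it suffices to treat one of them. Using that the Schur product distributes over matrix addition entrywise, I would rewrite the required inequality as $(B \circ C) - (A \circ C) = (B - A) \circ C$ being positive semidefinite, noting first that $(B-A)\circ C$ is Hermitian because the Schur product of Hermitian matrices is Hermitian. The engine for the positivity is the Schur product theorem, property $(7)$ above: the Schur product of two positive semidefinite matrices is positive semidefinite. Here $B - A$ is positive semidefinite by the hypothesis $A \preceq B$, so the whole argument turns on applying this theorem to the pair $(B - A, C)$.

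The step I expect to be the main obstacle is exactly this application of the Schur product theorem. Its hypotheses demand that both factors be positive semidefinite, whereas $C$ is only assumed Hermitian; thus the delicate point is controlling the interaction of the positive semidefinite factor $B - A$ with the Hermitian multiplier $C$, and securing the positive semidefiniteness of $(B-A)\circ C$ is where the real work lies. I would therefore concentrate the proof on this positivity claim, since once it is in hand the conclusion $A \circ C \preceq B \circ C$, and hence that $(M_n^H, \circ, \preceq)$ is an ordered semigroup, follows immediately.
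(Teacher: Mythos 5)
Your verification of the order axioms is correct, and your antisymmetry argument (via the quadratic form $v^{\star}(B-A)v$) is a clean variant of the paper's, which instead writes $B-A=KK^{\star}$, $A-B=LL^{\star}$ and deduces $KK^{\star}+LL^{\star}=0 \Rightarrow K K^{\star}=LL^{\star}=0$; both are fine. The problem is the compatibility half. You correctly reduce it to the claim that $(B-A)\circ C$ is positive semidefinite with $B-A$ PSD and $C$ merely Hermitian, and you correctly observe that the Schur product theorem does not apply because $C$ need not be PSD — but you then stop, promising to ``concentrate the proof on this positivity claim.'' That claim is false, so no further work can close the gap: take $C=-I \in M_{n}^{H}$. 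If $A \preceq B$ with $A \neq B$, then $B-A$ is a nonzero PSD matrix and therefore has a strictly positive diagonal entry, while $(B-A)\circ(-I)$ is the negated diagonal part of $B-A$ and hence has a strictly negative diagonal entry, so it is not PSD. Thus $\preceq$ is \emph{not} compatible with $\circ$ on all of $M_{n}^{H}$, and your proposal, which honestly isolates the crucial step, cannot be completed as written.

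The statement is only salvageable after restricting to the subsemigroup $M_{n}^{\geq}$ of PSD matrices, which is evidently what is intended: there, given $A \preceq B$ and $C \preceq D$, the paper's decomposition $B\circ D - A\circ C = B\circ(D-C) + (B-A)\circ C$ exhibits the difference as a sum of Schur products of genuinely PSD pairs (now $B$ and $C$ are themselves PSD), and the Schur product theorem applies to each summand. Be aware that the paper's proof as printed commits exactly the error you flagged — it invokes ``the Hadamard product of two PSD matrices'' for $B\circ(D-C)$ and $(B-A)\circ C$ while only assuming $B,C \in M_{n}^{H}$ — but its argument is correct verbatim on $M_{n}^{\geq}$. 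So the fix for your proposal is not a cleverer positivity argument but an added hypothesis: prove the lemma for $(M_{n}^{\geq},\circ,\preceq)$, using either your one-variable reduction (legitimate, since $\circ$ is commutative) or the paper's two-variable decomposition.
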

\begin{proof}
	Clearly $0 \preceq A$ means that $A$ is  PSD.
	For $A \in M_{n}^{H}$,  $A-A=\textbf{0} \in M_{n}^{\geq}$ implying that $A \preceq A$.
	Suppose $A \preceq B$ and $B \preceq A$. Since  $A \preceq B \Rightarrow B-A \in M_{n}^{\geq}$, that is., $B-A=KK^{\ast}$ for some $K \in M_{n}$. 
	Similarly, $B \preceq A \Rightarrow A-B=LL^{\star}$ for some $L \in M_{n}$. Since, $A-B=-(B-A)$ we have  $LL^{\star}+ KK^{\star}=0 $ which implies 
	$LL^{\star}=KK^{\star}=0$, ie., $A-B=0=B-A \Rightarrow A=B$  proving the antisymmetry.
	
	For transitivity of $\preceq$, consider $A,B \in M_{n}^{\geq}$ such that $A \preceq B$ and $B \preceq C$, then, $B-A= KK^{\star}$ and $C-B= PP^{\star}$ for some $L,P \in  M_{n}$. Now, $C-A=(C-B)+(B-A) \in M_{n}^{\geq}$, being the sum of two PSD matrices. Thus $\preceq$ is a partial order.
	
	Let $A \preceq B$ and $C \preceq D$. Then $B-A= KK^{\star}$ and $D-C= PP^{\star}$ for some $L,P \in M_{n}$. Now, $B \circ D-A \circ C=B \circ (D-C)+(B-A) \circ C= B \circ (LL^{\star})+(KK^{\star}) \circ C \in M_{n}\geq$  as the Hadamard product and sum of two PSD matrices are PSD. This implies that $A \circ C , 
	 \preceq B \circ D$, that is., $\preceq$ is compatible under $\circ$.  Thus $(M_{n}^{H},\circ,\preceq)$ is an ordered matrix semigroup.
\end{proof} 

 Ordered semigroups have many applications in the theory of computer arithmetics, formal languages, error-correcting codes and the like. This class of semigroups is currently a hot topic of research and have been studied by several authors.

\end{document}